 \newcounter{num}
 \newtheorem{theorem}{Theorem}[section]
\theoremstyle{plain}
\newtheorem{corollary}[theorem]{Corollary}
\newtheorem{lemma}[theorem]{Lemma}
\theoremstyle{remark}
\theoremstyle{definition}
\begin{document}

\title{A Note on Alternating Projections \\ 
       in Hilbert Space}
\author{Eva Kopeck\'a and Simeon Reich}
\date{}
\maketitle

\baselineskip 22pt plus 2pt

\newcommand{\be}        {\begin{eqnarray}}
\newcommand{\ee}        {\end{eqnarray}}
\newcommand{\pl}{\partial}
\newcommand{\sbs}{\subset}
\newcommand{\vr}{\varphi}
\newcommand{\lm}{\lambda}
\newcommand{\eps}{\varepsilon}
\newcommand{\nb}{\nabla}
\newcommand{\wt}{\widetilde}

\newcommand{\cA}      {{\mathcal A}}
\newcommand{\cM}      {{\mathcal M}}

\newcommand{\BB}       { \mathbb{B}}
\newcommand{\RR}       { \mathbb{R}}
\newcommand{\CC}       { \mathbb{C}}
\newcommand{\NN}       { \mathbb{N}}
\newcommand{\spn}{{\rm span\,}}
\newcommand{\dist}{{\rm dist\,}}

\begin{abstract}
We provide a direct proof of a result regarding the asymptotic behavior  
of alternating nearest point projections onto two closed and convex sets 
in Hilbert space. Our arguments are based on nonexpansive mapping theory.  
\end{abstract}

\medskip

\noindent 2010 Mathematics Subject Classification: 
41A65, 46C05, 47H09, 90C25.
\medskip

\noindent Key words and phrases: Alternating nearest point projections, 
asymptotic behavior, asymptotically regular mapping, closed and convex 
set, Hilbert space, strongly nonexpansive mapping. 

\bigskip
\bigskip

\section{Introduction}\label{Sec-intro}
\setcounter{equation}{0}

The purpose of this note is to provide a direct proof of a result 
regarding the asymptotic behavior of alternating nearest point projections 
onto two closed and convex subsets of a Hilbert space (see Theorem 
\ref{thm-dist} below). Our arguments are based on nonexpansive mapping 
theory. 

Let $S_1$ and $S_2$ be two closed subspaces of a real Hilbert space
$(H, \langle \cdot, \cdot \rangle)$ with induced norm $| \cdot |$, and let 
$P_1 : H \to S_1$ and
$P_2 : H \to S_2$ be the corresponding orthogonal projections of
$H$ onto $S_1$ and $S_2$, respectively. Denote by
$\NN = \{0, 1, 2, \dots \}$ the set of nonnegative integers.
Let $x_0$ be an arbitrary point in $H$, and define the sequence
$\{x_n : n \in \NN\}$ of alternating projections by
\begin{equation}
\label{equ-alter}
x_{2n+1} = P_1x_{2n} \text{\quad and \quad} x_{2n+2} = P_2x_{2n+1},
\end{equation}
where $n \in \NN$.

We begin by recalling von Neumann's classical theorem 
\cite[page 475]{Ne49}.

\begin{theorem}
\label{thm-vn}
The sequence $\{x_n : n \in \NN\}$ defined by (\ref{equ-alter})
converges in norm as $n \rightarrow \infty$ to $P_{S}x_0$, where $P_{S} :
H \to S$ is the
orthogonal projection of $H$ onto the intersection $S = S_1 \cap S_2$.
\end{theorem}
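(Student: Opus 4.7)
The plan is to show that $\{x_n\}$ is a Cauchy sequence in norm and then to identify its limit as $P_S x_0$. The key tool will be an inductive identity expressing every inner product $\langle x_k, x_l\rangle$ (for $1\le k\le l$) as a squared norm $|x_p|^2$ for some $p\in[k,l]$.

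First I would note that $|x_n|$ is non-increasing, since each $P_i$ is a contraction toward $0$ (as $0\in S_1\cap S_2$), so $|x_n|$ converges to some $L\ge 0$. Writing $i_n\in\{1,2\}$ for the projection index used at step $n$ (so $x_n=P_{i_n}x_{n-1}$ for $n\ge 1$), the Pythagorean identity applied to the orthogonal decomposition $x_n=x_{n+1}+(x_n-x_{n+1})$, with $x_{n+1}\in S_{i_{n+1}}$ and $x_n-x_{n+1}\in S_{i_{n+1}}^\perp$, yields $|x_n|^2=|x_{n+1}|^2+|x_n-x_{n+1}|^2$; in particular, $|x_n-x_{n+1}|\to 0$ and $|x_n|^2\to L^2$.

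The heart of the proof is the claim that for all $1\le k\le l$ there exists $p\in[k,l]$ with $\langle x_k,x_l\rangle=|x_p|^2$. I would prove this by induction on $l-k$. The base cases $l=k$ and $l=k+1$ are immediate (the latter from $x_k-x_{k+1}\perp x_{k+1}$). For $l-k\ge 2$, self-adjointness of $P_{i_l}$ gives $\langle x_k,x_l\rangle=\langle P_{i_l}x_k,x_{l-1}\rangle$, and the dichotomy $i_l=i_k$ versus $i_l\ne i_k$ (in the latter case $i_l=i_{k+1}$ since only two indices are available) yields $P_{i_l}x_k=x_k$ or $P_{i_l}x_k=x_{k+1}$ respectively. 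Thus the expression reduces to either $\langle x_k,x_{l-1}\rangle$ or $\langle x_{k+1},x_{l-1}\rangle$, both of which have strictly smaller index-gap, and the inductive hypothesis delivers an admissible $p\in[k,l]$.

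Combining this identity with $|x_n|^2\to L^2$ gives $\langle x_k,x_l\rangle\to L^2$ as $k,l\to\infty$, whence
\[
|x_k-x_l|^2=|x_k|^2+|x_l|^2-2\langle x_k,x_l\rangle\longrightarrow 0,
\]
so $\{x_n\}$ is Cauchy and converges in norm to some $y\in H$. Since $x_{2n+1}\in S_1$ and $x_{2n+2}\in S_2$ and both subspaces are closed, the norm limit $y$ lies in $S_1\cap S_2=S$. For any $z\in S\subseteq S_{i_{n+1}}$, the increment $x_{n+1}-x_n\in S_{i_{n+1}}^\perp$ is orthogonal to $z$, so telescoping yields $\langle x_n-x_0,z\rangle=0$ for every $n$; passing to the limit gives $x_0-y\in S^\perp$. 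Together with $y\in S$, this forces $y=P_Sx_0$. I expect the bookkeeping in the inductive bilinear identity to be the main subtlety; the remaining steps are standard Hilbert-space manipulations.
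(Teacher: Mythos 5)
Your argument is correct, but note that the paper does not prove this statement at all: Theorem \ref{thm-vn} is quoted as von Neumann's classical result, with the proof delegated to \cite{Ne49} and to the later references \cite{BMR04}, \cite{KR04}, \cite{KR10}, \cite{KR11}. So there is no in-paper proof to compare against; what you have supplied is a self-contained elementary proof, and it checks out. The key inductive identity is sound: for $1\le k\le l$, self-adjointness of $P_{i_l}$ gives $\langle x_k,x_l\rangle=\langle P_{i_l}x_k,x_{l-1}\rangle$, and since the indices alternate, either $i_l=i_k$ (so $P_{i_l}x_k=x_k$ because $x_k\in S_{i_k}$ for $k\ge 1$) or $i_l=i_{k+1}$ (so $P_{i_l}x_k=x_{k+1}$ by definition of the iteration); in both cases the gap $l-k$ strictly decreases and the resulting index $p$ stays in $[k,l]$. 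Combined with the monotone convergence $|x_n|^2\downarrow L^2$, this squeezes $\langle x_k,x_l\rangle$ between $L^2$ and $|x_k|^2$, so $|x_k-x_l|^2=|x_k|^2+|x_l|^2-2\langle x_k,x_l\rangle\to 0$ and the sequence is Cauchy; the identification of the limit as $P_Sx_0$ via $y\in S$ and $x_0-y\in S^\perp$ (telescoping the orthogonal increments) is standard and correct. The only mild caveat is that the inductive claim requires $k\ge 1$ (you need $x_k\in S_{i_k}$, which fails for $x_0$ in general), but you have stated it for $1\le k\le l$, so this is consistent. Your route is genuinely more elementary than the machinery the present paper develops for its main result (asymptotic regularity and strong nonexpansiveness), though of course that machinery is aimed at the convex, not the linear, setting.
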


This theorem was rediscovered by several other authors; see, for example,
\cite{Ar50}, \cite{Na53} and \cite{Wi55}. More information regarding this
theorem and its diverse applications can be found in \cite{De01} and the
references mentioned therein. Several recent proofs of Theorem 
\ref{thm-vn} can be found, for instance, in \cite{BMR04}, \cite{KR04}, 
\cite{KR10} and \cite{KR11}.

Now let $C_1$ and $C_2$ be two nonempty, closed and convex subsets of  
$(H, \langle \cdot, \cdot \rangle)$, and let $P_1 : H \to C_1$ and 
$P_2 : H \to C_2$ be the corresponding nearest point projections of 
$H$ onto $C_1$ and $C_2$, respectively. 

Let $D$ be a subset of $H$. Recall that a mapping $T: D \to H$ is 
called {\em nonexpansive} (that is, $1$-Lipschitz) if 
$|Tx - Ty| \le |x - y|$ for all $x$ and $y$ in $D$.   

\begin{theorem}
\label{thm-breg}
Assume that the intersection $C = C_1 \cap C_2$ is not empty. 
Then the sequence $\{x_n : n \in \NN\}$ defined by (\ref{equ-alter})
converges weakly as $n \rightarrow \infty$ to $R_Cx_0$, where 
$R_C : H \to C$ is a nonexpansive retraction of $H$ onto $C$. 
\end{theorem}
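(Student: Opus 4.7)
The plan is to view the iteration via $x_{2n+2} = T x_{2n}$ with $T = P_2 P_1$ and exploit the \emph{firm} nonexpansiveness of the two metric projections, namely
\[
|P_i x - P_i y|^2 + |(I-P_i)x - (I-P_i)y|^2 \le |x-y|^2, \quad i = 1, 2.
\]
Fixing any $z \in C$ (so that $P_1 z = P_2 z = z$) and applying this inequality with $y = z$ gives the key recursion $|x_{n+1}-z|^2 + |x_{n+1}-x_n|^2 \le |x_n - z|^2$. Consequently $\{|x_n - z|\}$ is nonincreasing for every $z \in C$ (hence convergent) and, telescoping, $|x_{n+1} - x_n| \to 0$. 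In particular, $\{x_n\}$ is bounded and has weak cluster points.

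Next, I would identify every weak cluster point of $\{x_n\}$ as a member of $C$. Since $C_1$ and $C_2$ are weakly closed (being closed and convex) and the odd-indexed and even-indexed iterates lie in $C_1$ and $C_2$ respectively, the asymptotic regularity $|x_{n+1} - x_n| \to 0$ forces the two subsequences to share all weak cluster points, and these therefore all lie in $C_1 \cap C_2 = C$.

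To upgrade this to weak convergence of the full sequence, I would apply an Opial-type argument. Suppose $y, y' \in C$ arise as weak cluster points along subsequences $x_{n_k} \rightharpoonup y$ and $x_{m_k} \rightharpoonup y'$. Both $|x_n - y|$ and $|x_n - y'|$ converge by the first step, so the identity
\[
|x_n - y|^2 - |x_n - y'|^2 = -2\langle x_n,\, y - y'\rangle + |y|^2 - |y'|^2
\]
shows that $\langle x_n, y-y'\rangle$ converges. Evaluating this inner product along the two subsequences yields $\langle y, y-y'\rangle = \langle y', y - y'\rangle$, hence $|y - y'|^2 = 0$ and $y = y'$. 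Define $R_C x_0$ to be this unique weak limit.

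Finally, $R_C$ is a nonexpansive retraction onto $C$: if $x_0 \in C$, the sequence is stationary, so $R_C x_0 = x_0$; for general starting points $x_0, x_0'$ with induced sequences $\{x_n\}, \{x_n'\}$, the nonexpansiveness of each $P_i$ gives $|x_n - x_n'| \le |x_0 - x_0'|$ for every $n$, and weak lower semicontinuity of the norm then yields $|R_C x_0 - R_C x_0'| \le |x_0 - x_0'|$. The main obstacle is the Opial-type uniqueness step, which hinges on having a decreasing distance to \emph{every} point of $C$ rather than just one reference point --- an output of firm nonexpansiveness that mere nonexpansiveness would not supply.
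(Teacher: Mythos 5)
Your proposal is correct, but there is nothing in the paper to compare it against: Theorem \ref{thm-breg} is stated without proof and attributed to Bregman \cite{Br65}, so you have supplied a self-contained argument where the paper supplies none. Your route is the standard one and each step is sound: the projection inequality specialized to $y=z\in C$ (which is precisely Lemma \ref{lem-proj} of the paper) yields the recursion $|x_{n+1}-z|^2+|x_{n+1}-x_n|^2\le|x_n-z|^2$, hence Fej\'er monotonicity with respect to $C$ and, by telescoping, $\sum_n|x_{n+1}-x_n|^2<\infty$, so $|x_{n+1}-x_n|\to 0$; weak closedness of the convex sets $C_1$ and $C_2$ then places every weak cluster point in $C$, and the Opial-type computation with $\langle x_n, y-y'\rangle$ correctly forces uniqueness of the cluster point; the retraction and nonexpansiveness properties of $R_C$ follow as you say from stationarity on $C$, $|x_n-x_n'|\le|x_0-x_0'|$, and weak lower semicontinuity of the norm. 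The only point I would adjust is your closing remark: the decrease of $|x_n-z|$ for \emph{every} $z\in C$ does not require firm nonexpansiveness, since each such $z$ is a common fixed point of $P_1$ and $P_2$ and plain nonexpansiveness already gives $|x_{n+1}-z|\le|x_n-z|$. What the firm nonexpansiveness (equivalently, the extra square term in Lemma \ref{lem-proj}) genuinely buys is the asymptotic regularity $|x_{n+1}-x_n|\to 0$, without which you could not identify the weak cluster points of the even and odd subsequences and locate them in $C$. Your argument also dovetails with the machinery the paper does develop for Theorem \ref{thm-dist}: the asymptotic regularity you derive directly is the content of Lemma \ref{lem-ar} there.
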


This theorem is due to Bregman \cite[page 688]{Br65}. It is now known 
\cite{Hu04} that the sequence $\{x_n : n \in \NN\}$ of alternating nearest 
point projections does not, in general, converge in norm. In this 
connection, see also \cite{MR03} and \cite{Ko08}. 

When can we be sure that norm convergence does occur and what happens 
when the sets $C_1$ and $C_2$ are disjoint? In order to recall 
certain answers to these natural questions, 
we first denote by ${\rm d}(C_1, C_2)$ the distance between the sets $C_1$ 
and $C_2$, that is,
\begin{equation}
\label{equ-dist}
{\rm d}(C_1, C_2) := \inf\{|x - y| : x \in C_1, \; y \in C_2\}.
\end{equation}

Now we can quote \cite[Theorem 4.1]{KR04}. In this connection, see also 
\cite{BBR78}, \cite{BB93} and \cite{BB94}.  

\begin{theorem}
\label{thm-kr}
Let $C_1$ and $C_2$ be two nonempty, closed and convex subsets of 
the Hilbert space $H$, and let $P_1 : H \to C_1$   
and $P_2 : H \to C_2$ be the corresponding nearest point
projections of
$H$ onto $C_1$ and $C_2$, respectively.
Let the sequence $\{x_n : n \in \NN\}$ be defined by (\ref{equ-alter}). 
\begin{enumerate}
\item [(a)] If ${\rm d}(C_1,C_2)$ is attained, 
then the sequence $\{x_{2n} : n \in \NN\}$ converges 
weakly as $n \rightarrow \infty$ to a fixed point z of $P_2P_1$ and 
$\{x_{2n+1} : n \in \NN\}$ converges weakly 
as $n \rightarrow \infty$ to $P_1z$.
\item[(b)] If ${\rm d}(C_1,C_2)$ is not attained, then the 
sequence $\{|x_n| : n \in \NN\} \rightarrow \infty$ as $n \rightarrow \infty$.
\item[(c)] If both $C_1$ and $C_2$ are symmetric with respect to the 
origin, then the sequence $\{x_n : n \in \NN\}$ converges in norm as 
$n \rightarrow \infty$ to a point in the intersection $C = C_1 \cap C_2$.
\end{enumerate}
\end{theorem}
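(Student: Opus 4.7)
The plan is to treat all three parts uniformly by exploiting that each $P_i$ is firmly nonexpansive, so the iteration mapping $T := P_2 P_1$ is averaged and falls under the classical theory of iterates of strongly nonexpansive self-maps of Hilbert space. The key inequality throughout will be the firm-nonexpansiveness estimate
\[
|P_i x - P_i y|^2 + |(I - P_i)x - (I - P_i)y|^2 \le |x - y|^2.
\]

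For (a), I first observe that if $d(C_1, C_2)$ is attained at some $(a,b) \in C_1 \times C_2$, then $P_1 b = a$ and $P_2 a = b$, since any other $c \in C_1$ satisfies $|c - b| \ge d(C_1, C_2) = |a - b|$. Hence $b \in \Fix(T)$, and the classical weak convergence theorem for iterates of a firmly-nonexpansive composition with fixed points gives $x_{2n} = T^n x_0 \to z$ weakly for some $z \in \Fix(T)$. To identify the weak limit of the odd subsequence, I apply the firm-nonexpansive inequality at each step with $y = z$ (for $P_1$) and then $y = P_1 z$ (for $P_2$), chaining them into
\[
|x_{2n+2} - z|^2 \le |x_{2n} - z|^2 - |(x_{2n} - x_{2n+1}) - (z - P_1 z)|^2 - |(x_{2n+1} - x_{2n+2}) - (P_1 z - z)|^2.
\]
Telescoping yields $\sum_n |(x_{2n} - x_{2n+1}) - (z - P_1 z)|^2 < \infty$, so $x_{2n+1} - x_{2n} \to P_1 z - z$ in norm, and consequently $x_{2n+1} \to P_1 z$ weakly.

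For (b), I argue by contradiction. Assume $(x_n)$ is bounded; then so is $(T^n x_0)$. For an averaged nonexpansive mapping with bounded orbit, Bruck's asymptotic regularity theorem gives $x_{2n+2} - x_{2n} \to 0$ in norm. Passing to a weakly convergent subsequence $x_{2n_k} \to z$, demi-closedness of $I - T$ at the origin (valid because $T$ is nonexpansive on a Hilbert space) yields $z \in \Fix(T)$. Reversing the first step of (a), $(P_1 z, z) \in C_1 \times C_2$ is then a minimizing pair for $d(C_1, C_2)$, contradicting the hypothesis that the distance is not attained.

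For (c), symmetry of each $C_i$ together with convexity places $0 \in C = C_1 \cap C_2$, and $-z \in C$ whenever $z \in C$. Theorem~\ref{thm-breg} supplies a weak limit $z \in C$. Applying the firm-nonexpansive inequality with $y = 0$ yields the descent $|x_{n+1}|^2 + |x_{n+1} - x_n|^2 \le |x_n|^2$, so $|x_n| \searrow L$ and $\sum_n |x_{n+1} - x_n|^2 < \infty$. Both $|x_n - z|$ and $|x_n + z|$ are Fej\'er-monotone (since $\pm z \in C$), and the weak-limit identity $\langle x_n, z \rangle \to |z|^2$ forces $\lim_n |x_n - z|^2 = L^2 - |z|^2$. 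To promote weak convergence to norm convergence it suffices to show $L = |z|$, by the Radon--Riesz property of Hilbert space. I expect this last identification to be the main obstacle: the Fej\'er and parallelogram data alone leave one free parameter, and pinning down $L = |z|$ will likely require using the oddness of the projections $P_i$ (a consequence of symmetry) more substantially, perhaps via a direct strong-compactness argument on some subsequence of the iterates.
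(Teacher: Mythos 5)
First, note that the paper itself offers no proof of this theorem: it is quoted verbatim from \cite{KR04}, so your proposal has to be judged on its own merits rather than against an argument in the text. Part (a) of your proposal is correct and complete. The observation that an attaining pair $(a,b)$ forces $P_1b=a$, $P_2a=b$ and hence $b\in\Fix(P_2P_1)$ is right; the composition of two firmly nonexpansive maps is averaged, so the classical weak convergence of its iterates to a fixed point applies; and the chained firm-nonexpansiveness inequality, after telescoping against the decreasing sequence $|x_{2n}-z|^2$, does give $x_{2n+1}-x_{2n}\to P_1z-z$ in norm and hence the weak limit $P_1z$ of the odd subsequence. This is a clean route to (a).

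The other two parts have genuine gaps. In (b) you prove the wrong statement: assuming $(x_n)$ is \emph{bounded} and deriving a contradiction shows only that $(x_n)$ is unbounded, whereas the theorem asserts $|x_n|\to\infty$; the negation of the latter is merely that some subsequence stays bounded, and for such a subsequence your appeal to ``asymptotic regularity for averaged maps with bounded orbit'' no longer applies as stated. The repair is available inside the paper: asymptotic regularity of $P_2P_1$ holds unconditionally (Lemma \ref{lem-ar}), so a bounded subsequence of $(x_{2n})$ already has a weak cluster point which, by demiclosedness of $I-P_2P_1$, lies in $\Fix(P_2P_1)$, and then Lemma \ref{lem-d} shows the distance is attained --- but you must run the argument on a subsequence, not on the whole orbit. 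In (c) the proof is simply not there: everything you establish (weak convergence via Theorem \ref{thm-breg}, monotonicity of $|x_n|$, summability of $|x_{n+1}-x_n|^2$, Fej\'er monotonicity with respect to $\pm z$) uses only $0\in C_1\cap C_2$ and holds for arbitrary closed convex sets containing the origin, for which norm convergence is known to fail (Hundal's example can be translated to contain $0$). So the identification $L=|z|$ that you defer is not a technical remainder; it is the entire content of part (c), and it must use the symmetry substantively --- in \cite{KR04} this is done by exploiting that projections onto symmetric sets are odd, so that both $x_n$ and $-x_n$ serve as test points in the variational inequality, which leads to a direct Cauchy estimate on the iterates rather than an upgrade of weak to norm convergence.
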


Part (c) of Theorem \ref{thm-kr} seems to be a good nonlinear analogue 
of von Neumann's linear Theorem \ref{thm-vn}. 

Now we are ready to state the result which is of concern to us in the 
present note. 

\begin{theorem}
\label{thm-dist}
Let $C_1$ and $C_2$ be two nonempty, closed and convex subsets of a real
Hilbert space $(H, \langle \cdot, \cdot \rangle)$ 
with induced norm $| \cdot |$, and let $P_1 : H \to C_1$ 
and $P_2 : H \to C_2$ be the corresponding nearest point 
projections of
$H$ onto $C_1$ and $C_2$, respectively.
Let the sequence $\{x_n : n \in \NN\}$ be defined by (\ref{equ-alter}).
Then 
\begin{equation}
\label{equ-thm}
\lim_{n \rightarrow \infty}|x_{2n+2} - x_{2n+1}| = \lim_{n \rightarrow 
\infty}|x_{2n+1} - x_{2n}| = {\rm d}(C_1, C_2). 
\end{equation}
\end{theorem}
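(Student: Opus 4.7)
The plan is to establish that the monotone non-increasing sequence $|x_{n+1} - x_n|$ (monotone by the nonexpansiveness of $P_1, P_2$) converges to $d := {\rm d}(C_1, C_2)$. Writing $y_k := x_{k+1} - x_k$, both the even and odd subsequences have the common limit $L$ of $|y_k|$, and since $x_{2n+1} \in C_1$ and $x_{2n} \in C_2$ for $n \geq 1$, the lower bound $L \geq d$ is immediate. The task reduces to showing $L \leq d$.

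To this end, fix $\varepsilon > 0$ and pick $a \in C_1,\ b \in C_2$ with $|b - a| < d + \varepsilon$. The firm nonexpansiveness of metric projections onto closed convex sets (together with $P_1 a = a$ and $P_2 b = b$) yields the Pythagorean-type bounds $|x_{2n+1} - a|^2 + |y_{2n}|^2 \leq |x_{2n} - a|^2$ and $|x_{2n+2} - b|^2 + |y_{2n+1}|^2 \leq |x_{2n+1} - b|^2$. Adding these and using the elementary identity $|u - b|^2 - |v - b|^2 = |u - a|^2 - |v - a|^2 + 2\langle u - v, a - b\rangle$ to change the reference point from $b$ to $a$ in the second estimate produces
\[
|y_{2n}|^2 + |y_{2n+1}|^2 \leq |x_{2n} - a|^2 - |x_{2n+2} - a|^2 + 2\langle y_{2n+1}, b - a\rangle.
\]
Summing over $n = 0, \ldots, N - 1$ telescopes the $a$-distance terms; after discarding the nonnegative $|x_{2N} - a|^2$ and applying Cauchy--Schwarz with $|b - a| < d + \varepsilon$, one obtains
\[
\sum_{n=0}^{N-1}\bigl(|y_{2n}|^2 + |y_{2n+1}|^2\bigr) \leq |x_0 - a|^2 + 2(d + \varepsilon)\sum_{n=0}^{N-1}|y_{2n+1}|.
\]
Dividing by $N$ and letting $N \to \infty$, Ces\`aro averaging of the monotone convergent sequences $|y_{2n}|^2, |y_{2n+1}|^2 \to L^2$ and $|y_{2n+1}| \to L$ yields $2L^2 \leq 2L(d + \varepsilon)$, so $L \leq d + \varepsilon$; sending $\varepsilon \to 0$ gives $L \leq d$.

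The subtle feature is that this argument handles both cases of Theorem \ref{thm-kr} uniformly: whether $d(C_1, C_2)$ is attained (so $\{x_n\}$ is bounded) or not (so $|x_n| \to \infty$). The reason a single estimate suffices is that $|x_{2N} - a|^2$ is simply discarded from the telescoped sum rather than estimated---any attempt to control this term directly would break down in the unbounded case. Together with the reference-point identity for switching from $b$ to $a$, this is the main non-obvious step of the proof.
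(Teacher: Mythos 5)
Your argument is correct, and it takes a genuinely different route from the paper. The paper splits into two cases according to whether ${\rm d}(C_1,C_2)$ is attained, and in each case leans on three nontrivial external facts: Bauschke's theorem that compositions of projections are asymptotically regular, the Cheney--Goldstein description of ${\rm F}(P_2P_1)$ as $\{b \in C_2 : |b - P_1b| = {\rm d}(C_1,C_2)\}$, and the Bruck--Reich result that firmly nonexpansive maps are strongly nonexpansive; the key step is that strong nonexpansiveness upgrades the vanishing of $|x_{2n+1}-P_1z|-|P_2x_{2n+1}-P_2P_1z|$ to norm convergence of the difference vectors $x_{2n+1}-x_{2n+2}$. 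Your proof instead uses only the elementary Pythagorean inequality for projections (the paper's Lemma \ref{lem-proj}), combined with the reference-point identity, telescoping, and Ces\`aro averaging; discarding the term $-|x_{2N}-a|^2$ is exactly what lets a single estimate cover both the bounded and unbounded regimes, so no case distinction is needed. What the paper's heavier machinery buys is strictly more than the stated theorem: it yields norm convergence of the vectors $x_{2n+1}-x_{2n+2}$ themselves (in the attained case, to $P_1z - z$), whereas your argument controls only the norms $|y_k|$ --- though the vector statement (Corollary \ref{cor-least}) is then recovered from the theorem by the parallelogram law anyway, so nothing is ultimately lost. Two trivial points to tidy up: the monotonicity of $|x_{k+1}-x_k|$ comes from the defining minimality of the nearest point projection (as in the paper's display (1.5)) rather than from nonexpansiveness per se, and it holds from index $k=1$ on since $x_0$ need not lie in $C_2$; and in the final step $2L^2 \le 2L({\rm d}+\varepsilon)$ one should note separately that $L=0$ gives $L \le {\rm d}$ trivially before dividing by $L$. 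Neither affects the validity of the proof.
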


This theorem was obtained in \cite[page 433]{BB94} as a consequence of the 
authors' analysis of Dykstra's algorithm. As we have already mentioned, 
our goal is to provide a
direct proof which is based on nonexpansive mapping theory. This proof is
given in Section \ref{Sec-Alternating}. In the next section we present    
several lemmata which are used in our proof.

We remark in passing that since, by the definition of the nearest point 
projection, we have  
\begin{equation}
|x_{2n+2} - x_{2n+1}| \le |x_{2n+1} - x_{2n}| \le |x_{2n} - x_{2n-1}| 
\le |x_{2n-1} - x_{2n-2}|
\end{equation}
for all $n \ge 2$, both sequences 
$\{|x_{2n+2} - x_{2n+1}|\}^{\infty}_{n=0}$
and 
$\{|x_{2n+1} - x_{2n}|\}^{\infty}_{n=1}$
{\em decrease} to their common limit. 

We conclude this introduction with a corollary of Theorem \ref{thm-dist}. 
It can be proved by appealing, for example, to the parallelogram law.  
In this connection, see also \cite[page 433]{BB94}. 

\begin{corollary}
\label{cor-least}
In the setting of Theorem \ref{thm-dist}, we have  
\begin{equation}
\label{equ-conv}
\lim_{n \rightarrow \infty}(x_{2n+2} - x_{2n+1}) 
= \lim_{n \rightarrow \infty}(x_{2n} - x_{2n+1}) = v, 
\end{equation}
where $v$ is the point of least norm in the closure of $C_2 - C_1$
and the convergence is in norm. 
\end{corollary}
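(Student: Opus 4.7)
The plan is to run both limits through the same Hilbert-space least-norm argument, using Theorem \ref{thm-dist} as a black box.

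First I would set $K := \overline{C_2 - C_1}$. Since $C_2$ and $C_1$ are convex, their Minkowski difference $C_2 - C_1$ is convex, so $K$ is a closed convex subset of $H$. Hence $K$ contains a unique point of least norm $v$, and from the definition of $\mathrm{d}(C_1,C_2)$ it is immediate that $|v| = \mathrm{d}(C_1, C_2)$. Now set
\[
y_n := x_{2n+2} - x_{2n+1}, \qquad z_n := x_{2n} - x_{2n+1}.
\]
Both sequences lie in $C_2 - C_1 \subset K$ (because $x_{2n}, x_{2n+2} \in C_2$ and $x_{2n+1} \in C_1$), and by Theorem \ref{thm-dist} their norms converge to $|v|$.

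The key step is then a standard parallelogram-law computation. For each $n$, the midpoint $(y_n + v)/2$ lies in $K$, so $|(y_n + v)/2| \ge |v|$, i.e.\ $|y_n + v|^2 \ge 4|v|^2$. The parallelogram identity gives
\[
|y_n - v|^2 = 2|y_n|^2 + 2|v|^2 - |y_n + v|^2 \le 2(|y_n|^2 - |v|^2),
\]
and the right-hand side tends to $0$ by Theorem \ref{thm-dist}. Hence $y_n \to v$ in norm. The identical argument applied to $z_n$ (again $(z_n + v)/2 \in K$, same parallelogram estimate) yields $z_n \to v$ in norm.

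I do not expect a real obstacle here: the content of the corollary is already concentrated in Theorem \ref{thm-dist}, which supplies $|y_n|, |z_n| \to |v|$, and the remainder is the elementary fact that in a Hilbert space, norm-minimizing sequences in a closed convex set converge to the unique minimizer. The only things to check carefully are (i) the convexity of $C_2 - C_1$, so that midpoints of $y_n$ (resp.\ $z_n$) with $v$ lie in $K$, and (ii) that the infimum defining $\mathrm{d}(C_1, C_2)$ really equals $\inf_{w \in K}|w|$, both of which are straightforward.
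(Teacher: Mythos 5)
Your proposal is correct and is essentially the paper's own argument: both identify $v$ as the least-norm point of the closed convex set $K=\overline{C_2-C_1}$ and apply the parallelogram law together with $|(y_n+v)/2|\ge |v|$ to get $|y_n-v|^2\le 2(|y_n|^2-|v|^2)\to 0$ via Theorem \ref{thm-dist}. The only (harmless) detail worth a glance is that $x_0$ is arbitrary, so $z_0=x_0-x_1$ need not lie in $K$; this does not affect the limit.
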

\begin{proof}
Let $K$ be the closure of $C_2 - C_1$ and consider, for instance, the 
sequence $\{y_n : n \in \NN\} \subset K$ defined by 
$y_n : = x_{2n+2} - x_{2n+1}$, where $n \in \NN$. Applying the 
parallelogram law to $y_n$ and $v$, we obtain 
\begin{equation}
|y_n + v|^2 + |y_n - v|^2 = 2(|y_n|^2 + |v|^2) 
\end{equation}
and
\begin{equation}
|(y_n -v)/2|^2 = (|y_n|^2 + |v|^2)/2 - |(y_n + v)/2|^2. 
\end{equation}  
Since $(y_n + v)/2 \in K$, we know that $|(y_n + v)/2| \ge |v|$. 
Hence 
\begin{equation} 
|y_n - v|^2 \le 2(|y_n|^2 - |v|^2)
\end{equation}
for each $n \in \NN$. The result now follows from Theorem \ref{thm-dist}. 
\end{proof}

\section{Nearest Point Projections}\label{Sec-Nearest}
\setcounter{equation}{0}
Preparing for the proof of Theorem \ref{thm-dist}, we first present 
several known facts regarding nearest point projections onto 
closed and convex sets in Hilbert spaces. 

\begin{lemma}
\label{lem-proj}
Let $C$ be a closed and convex subset of the Hilbert space $H$, 
and let $P_C : H \to C$ be the nearest point projection of $H$ onto 
$C$. Then 
\begin{equation}  
\label{equ-proj}
|y - P_Cx|^2 + |x - P_Cx|^2 \le |x - y|^2
\end{equation}
for all $x \in H$ and $y \in C$.
\end{lemma}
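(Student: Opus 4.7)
The plan is to reduce the stated inequality to the standard variational characterization of the metric projection in Hilbert space: writing $p := P_Cx$, the point $p$ is characterized within $C$ by the obtuse-angle condition $\langle x - p, y - p \rangle \le 0$ for every $y \in C$.

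Granted this characterization, the lemma is essentially a one-line expansion. Setting $a = x - p$ and $b = y - p$, one has $x - y = a - b$, so
\begin{equation*}
|x - y|^2 = |a|^2 + |b|^2 - 2\langle a, b \rangle = |x - p|^2 + |y - p|^2 - 2\langle x - p, y - p \rangle.
\end{equation*}
The inner-product term on the right is nonpositive by the variational inequality, so the last quantity is at least $|x - p|^2 + |y - p|^2$, which is exactly (\ref{equ-proj}).

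It therefore remains to justify the obtuse-angle inequality itself, and this is the only step that requires any genuine work. Since $C$ is convex, for each $y \in C$ and $t \in (0, 1]$ the convex combination $p_t := (1-t)p + ty$ lies in $C$; the minimizing property of $p$ then gives $|x - p|^2 \le |x - p_t|^2 = |x - p|^2 - 2t \langle x - p, y - p \rangle + t^2 |y - p|^2$. Rearranging, dividing by $t > 0$, and letting $t \to 0^+$ yields $\langle x - p, y - p \rangle \le 0$, as required. I do not anticipate any real obstacle: existence and uniqueness of $P_Cx$ for closed convex $C$ is classical (via the parallelogram law and completeness of $H$), and the remaining manipulations are purely algebraic. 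If the paper is willing to treat the variational characterization as standard, the whole argument collapses to the single expansion above.
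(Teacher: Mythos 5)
Your proof is correct and follows essentially the same route as the paper: both expand $|x-y|^2 = |x - P_Cx|^2 + 2\langle x - P_Cx, P_Cx - y\rangle + |y - P_Cx|^2$ and discard the nonnegative cross term via the variational characterization of the projection. The only difference is that you also supply the standard $t \to 0^+$ argument for the obtuse-angle inequality, which the paper treats as known.
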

\begin{proof}
We have 
\begin{equation} \nonumber
\begin{split}
|x - y|^2  
& = |x -P_Cx + P_Cx -y|^2 \\  
& = |x - P_Cx|^2 + 2\langle x - P_Cx, P_Cx - y \rangle + |y - P_Cx|^2 \\
& \ge |x - P_Cx|^2 + |y - P_Cx|^2,
\end{split}
\end{equation}
as asserted. 
\end{proof}

Let $D$ be a subset of $H$ and let $T : D \to H$ be a mapping. Recall 
that the mapping $T$ is said to be {\em asymptotically regular} at 
$x \in D$ (see, for example, \cite{BBR78}) if it can be iterated at $x$ 
and $\lim_{n \rightarrow \infty}
(T^nx - T^{n+1}x) = 0$. We now quote \cite[Theorem 3.1, page 144]{Ba02}.  
In this connection, see also \cite[Theorem 4.6, page 8]{BMMW12}.

\begin{lemma}
\label{lem-ar}
The compositon of finitely many nearest point projections onto closed 
and convex subsets of a Hilbert space is asymptotically regular. 
\end{lemma}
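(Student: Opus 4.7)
My approach proceeds through the theory of strongly nonexpansive mappings due to Bruck and Reich. The starting observation is that Lemma \ref{lem-proj}, applied symmetrically at two points and summed, yields the standard firmly nonexpansive inequality
\begin{equation*}
|P_C u - P_C w|^2 + |(u - P_C u) - (w - P_C w)|^2 \le |u - w|^2
\end{equation*}
for every metric projection $P_C$ onto a closed convex set. In particular, $P_C$ is \emph{strongly nonexpansive}: nonexpansive, and whenever $\{u_n\}, \{w_n\}$ are bounded with $|u_n - w_n| - |P_C u_n - P_C w_n| \to 0$, one also has $(u_n - P_C u_n) - (w_n - P_C w_n) \to 0$.

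The Bruck--Reich composition theorem ensures that the class of strongly nonexpansive self-maps of $H$ is closed under finite composition, so $T := P_k \circ \cdots \circ P_1$ is strongly nonexpansive. Fix $x \in H$ and set $x_n := T^n x$. Nonexpansiveness of $T$ makes $d_n := |x_n - x_{n+1}|$ nonincreasing, so $d_n - d_{n+1} \to 0$; applying the strong nonexpansiveness of $T$ to the pair $(x_n, x_{n+1})$ then gives
\begin{equation*}
(x_n - x_{n+1}) - (x_{n+1} - x_{n+2}) \to 0.
\end{equation*}
Combined with the Bruck--Reich strong convergence theorem for iterates of SNE maps in Hilbert space, this upgrades to norm convergence $x_n - x_{n+1} \to v$, where $v$ is the minimum-norm element of the closed convex hull of $(I - T)(H)$.

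It then remains to show $v = 0$. In the two-set case this follows from a direct calculation with Lemma \ref{lem-proj}: pick $y \in C_2$ with $\dist(y, C_1)$ within $\varepsilon$ of $d(C_1, C_2)$, and write $a := P_1 y$ and $b := Ty = P_2 a$. Lemma \ref{lem-proj} applied to $P_2$, with input point $a$ and test point $y \in C_2$, gives
\begin{equation*}
|Ty - y|^2 + |a - b|^2 \le |a - y|^2,
\end{equation*}
and since $|a - b| \ge d(C_1, C_2)$ we conclude
\begin{equation*}
|Ty - y|^2 \le \dist(y, C_1)^2 - d(C_1, C_2)^2,
\end{equation*}
which can be made arbitrarily small. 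The general $k$-set case is handled similarly, by induction on $k$ or via the Pierra product-space reduction to the two-set setting. The main obstacle is precisely this last step: showing that the minimal displacement of $T$ vanishes even when the sets may be mutually disjoint and the distances among them need not be attained. It is here that the firmly nonexpansive structure of the projections, rather than mere nonexpansiveness, enters essentially.
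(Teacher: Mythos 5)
Your skeleton is exactly right --- it is, in fact, the skeleton of Bauschke's proof in \cite{Ba02}, which is what the paper cites for this lemma rather than proving it. Your symmetrized-and-summed use of Lemma \ref{lem-proj} does yield the firmly nonexpansive inequality (the sum of the two instances reduces precisely to $\langle (u-P_Cu)-(w-P_Cw),\, P_Cu-P_Cw\rangle \ge 0$, which is equivalent to it); firmly nonexpansive maps are strongly nonexpansive; compositions of strongly nonexpansive maps are strongly nonexpansive by Bruck--Reich; and their convergence theorem gives $T^nx - T^{n+1}x \to v$ in norm, where $v$ is the least-norm displacement vector, so asymptotic regularity is equivalent to $v=0$, i.e.\ to $\inf_y |y - Ty| = 0$. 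Your two-set computation of this infimum is correct and complete: for $y \in C_2$ with $\dist(y,C_1) \le {\rm d}(C_1,C_2) + \eps$, Lemma \ref{lem-proj} applied to $P_2$ gives $|y - Ty|^2 \le \dist(y,C_1)^2 - {\rm d}(C_1,C_2)^2 \le 2\eps\,{\rm d}(C_1,C_2) + \eps^2$. Since the paper only ever applies the lemma to $P_2P_1$, your argument fully covers the paper's actual needs.

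For the lemma as stated, however --- finitely many projections --- the last step is a genuine gap, and your closing sentence concedes as much; neither of your proposed repairs works as described. Pierra's product-space device converts the \emph{averaged} (simultaneous) projection method into alternating projections between $C_1 \times \cdots \times C_k$ and the diagonal; it does not linearize the \emph{composition} $P_k \cdots P_1$. The cyclic variant replaces one of the two maps by the shift isometry of $H^k$, which is not a projection (nor strongly nonexpansive), so the two-set result cannot be invoked. Induction fails as well: writing $S := P_{k-1}\cdots P_1$, a point $y$ with $|y - Sy|$ small only yields $|y - P_kSy| \le |y - Sy| + \dist(Sy, C_k)$, and the term $\dist(Sy, C_k)$ is not controlled --- unlike in the two-set case, $S$ is not a projection, so Lemma \ref{lem-proj} no longer converts near-optimality of $y$ into a displacement bound. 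Proving $\inf_y |y - Ty| = 0$ for $k \ge 3$, with possibly mutually disjoint sets and unattained distances, is precisely the hard content of \cite[Theorem 3.1]{Ba02} (see also \cite{BMMW12}) and requires a separate argument there; it cannot be dismissed as ``handled similarly.'' In short: your proof is complete for $k=2$, but not for the stated lemma.
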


Our next lemma can be found in \cite[page 449]{CG59}. 

\begin{lemma}
\label{lem-d}
Let ${\rm d}(C_1, C_2)$ be the distance between two closed and convex 
subsets $C_1$ and $C_2$ of a Hilbert space $H$, and let 
$P_1 : H \to C_1$ and $P_2 : H \to C_2$ be the corresponding 
nearest point projections of $H$ onto $C_1$ and $C_2$, respectively.   
Then the fixed point set ${\rm F}(P_2P_1)$ of the composition $P_2P_1$ 
in $H$ coincides with the set 
$\{b \in C_2 : |b - P_1b| = {\rm d}(C_1, C_2)\}$. 
\end{lemma}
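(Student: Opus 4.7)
The plan is to prove the two set inclusions separately. For the inclusion $\supseteq$, I would take $b \in C_2$ with $|b - P_1 b| = \mathrm{d}(C_1, C_2)$, set $a = P_1 b \in C_1$, and argue that $b$ must already be the nearest point in $C_2$ to $a$. The point $P_2 a$ has distance to $a$ at most $|a - b|$ (by definition of the projection) and at least $\mathrm{d}(C_1, C_2) = |a - b|$ (since $P_2 a \in C_2$ and $a \in C_1$), so $|a - P_2 a| = |a - b|$; uniqueness of the nearest point in $C_2$ then forces $P_2 a = b$, giving $b \in \mathrm{F}(P_2 P_1)$.

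For the inclusion $\subseteq$, I would take $b \in \mathrm{F}(P_2 P_1)$, set $a = P_1 b$, so that $P_2 a = b$. The goal is now $|b - a| \le |y - x|$ for every $x \in C_1$ and $y \in C_2$. The natural tool is the variational characterization of the projection, which yields both $\langle b - a, x - a \rangle \le 0$ and $\langle a - b, y - b \rangle \le 0$. Decomposing $y - x = (y - b) + (b - a) + (a - x)$ and taking the inner product with $b - a$ should produce $\langle y - x, b - a \rangle \ge |b - a|^2$, after which the Cauchy--Schwarz inequality gives $|y - x| \ge |b - a|$. Taking the infimum over $x$ and $y$ then yields $\mathrm{d}(C_1, C_2) \ge |b - a|$, and the reverse inequality is immediate since $a \in C_1$ and $b \in C_2$.

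I do not foresee a serious obstacle. The main conceptual step is the simultaneous use of the two variational inequalities (for $P_1$ at $b$ and for $P_2$ at $a$) combined via the telescoping decomposition of $y - x$. An alternative route would apply Lemma \ref{lem-proj} twice --- once for $P_1$ with an arbitrary $x \in C_1$ and once for $P_2$ with an arbitrary $y \in C_2$ --- and then combine the resulting Pythagorean-type inequalities; this avoids Cauchy--Schwarz but feels slightly less transparent than the variational-inequality approach.
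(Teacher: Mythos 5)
Your proof is correct and complete: the $\supseteq$ inclusion via uniqueness of the nearest point, and the $\subseteq$ inclusion via the two variational inequalities $\langle b - P_1b,\, x - P_1b\rangle \le 0$ and $\langle P_1b - b,\, y - b\rangle \le 0$ combined through the decomposition $y - x = (y-b) + (b - P_1b) + (P_1b - x)$ and Cauchy--Schwarz, both go through without any gap. The paper itself offers no proof of this lemma --- it is quoted directly from the Cheney--Goldstein reference [CG59] --- but your argument is exactly the standard one for this result, so there is nothing further to compare.
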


If $D$ is a subset of $H$, then a mapping $T: D \to H$ is said to be 
{\em strongly nonexpansive} \cite{BR77} if it is nonexpansive and 
$\{(x_n - y_n) - (Tx_n - Ty_n)\} \rightarrow 0$ as $n \rightarrow \infty$ 
whenever $\{x_n\}$ and 
$\{y_n\}$ are two sequences in $D$ such that the sequence $\{x_n - y_n\}$ 
is bounded and $\{|x_n - y_n| - |Tx_n - Ty_n|\} \rightarrow 0$ 
as $n \rightarrow \infty$.

\begin{lemma}
\label{lem-sne}
Every nearest point projection is strongly nonexpansive. 
\end{lemma}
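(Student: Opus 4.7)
The plan is to establish strong nonexpansivity of $P_C$ by first upgrading Lemma \ref{lem-proj} to the (well-known) \emph{firm nonexpansivity} inequality
\[
|P_C x - P_C z|^{2} + |(x - P_C x) - (z - P_C z)|^{2} \le |x - z|^{2}
\qquad (x, z \in H),
\]
and then reading off the strong nonexpansivity directly from this.

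First I would prove firm nonexpansivity. The most transparent route is via the variational characterization of $P_C$: for every $x \in H$ and every $y \in C$,
\[
\langle x - P_C x, \, P_C x - y \rangle \ge 0,
\]
which one obtains by expanding $|x - ((1-t)P_C x + t y)|^{2} \ge |x - P_C x|^{2}$ and letting $t \downarrow 0$. Setting $u := x - P_C x$, $w := z - P_C z$, and choosing $y = P_C z$ in the first inequality and $y = P_C x$ in the symmetric one, I get $\langle u - w,\, P_C x - P_C z\rangle \ge 0$. Expanding
\[
|x - z|^{2} = |P_C x - P_C z|^{2} + 2 \langle P_C x - P_C z,\, u - w\rangle + |u - w|^{2}
\]
and throwing away the nonnegative middle term yields the firm nonexpansivity inequality. (Alternatively, applying Lemma \ref{lem-proj} twice -- once with $(x, y) = (x, P_C z)$ and once with $(x, y) = (z, P_C x)$ -- and adding also does the job, though a slightly longer computation is required to reorganize the right-hand side.)

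Next, I would derive strong nonexpansivity. Suppose $\{x_n\}$ and $\{y_n\}$ are sequences in $H$ such that $\{x_n - y_n\}$ is bounded and $|x_n - y_n| - |P_C x_n - P_C y_n| \to 0$. Rewriting the firm nonexpansivity inequality as a difference of squares,
\[
|(x_n - P_C x_n) - (y_n - P_C y_n)|^{2}
\le \bigl(|x_n - y_n| - |P_C x_n - P_C y_n|\bigr)\bigl(|x_n - y_n| + |P_C x_n - P_C y_n|\bigr).
\]
Since $P_C$ is $1$-Lipschitz, $|P_C x_n - P_C y_n| \le |x_n - y_n|$, so the second factor is bounded while the first tends to zero. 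Hence $(x_n - y_n) - (P_C x_n - P_C y_n) \to 0$, which is exactly the definition of strong nonexpansivity.

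I do not expect any genuine obstacle here; the only small choice is the cleanest path to firm nonexpansivity. Going through the variational inequality feels the most direct, but since Lemma \ref{lem-proj} is already in place, the authors may well prefer to invoke it instead and perform the algebraic rearrangement on the right-hand side.
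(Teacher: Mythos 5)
Your argument is correct. It factors through the same key notion as the paper's proof---firm nonexpansivity of $P_C$---but where the paper disposes of the lemma entirely by citation (nearest point projections are firmly nonexpansive \cite{GR84}, and firmly nonexpansive maps on uniformly convex Banach spaces are strongly nonexpansive by \cite[Proposition 2.1]{BR77}), you actually carry out both steps. Your derivation of the firm nonexpansivity inequality from the variational characterization $\langle x - P_Cx,\, P_Cx - y\rangle \ge 0$ is the standard one, and your passage from firm to strong nonexpansivity exploits what is special about Hilbert space: the deficit $|x_n-y_n|^2 - |P_Cx_n-P_Cy_n|^2$ dominates $|(x_n-y_n)-(P_Cx_n-P_Cy_n)|^2$ exactly, so the difference-of-squares factorization together with boundedness of $\{x_n-y_n\}$ finishes the proof in two lines. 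The cited Bruck--Reich result is far more general (it requires only uniform convexity and works with the modulus of convexity rather than an identity), so the trade-off is the usual one: your version is self-contained and elementary but specific to Hilbert space, while the paper's version leans on external machinery that would survive in a wider setting. For the purposes of this note, where everything takes place in Hilbert space, your direct proof is a perfectly adequate---arguably more transparent---substitute.
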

\begin{proof}
This assertion is true because every nearest point projection in Hilbert 
space is known to be firmly nonexpansive (see \cite[page 18]{GR84}) and 
every firmly nonexpansive mapping in a uniformly convex Banach space is 
strongly nonexpansive by \cite[Proposition 2.1, page 463]{BR77}. 

\end{proof}

\section{Alternating Projections}\label{Sec-Alternating}
\setcounter{equation}{0}

\begin{proof} [Proof of Theorem \ref{thm-dist}]
Since the composition $P_2P_1$ is asymptotically regular (see Lemma 
\ref{lem-ar}), the sequence  
$\{x_{2n+2} - x_{2n}\} \rightarrow 0$ as $n \rightarrow \infty$ and 
therefore the evaluation of the second limit follows from the evaluation 
of the first one. Alternatively, we can simply interchange the roles of 
$C_1$ and $C_2$.

In order to evaluate the first limit, assume initially that the 
distance ${\rm d}(C_1, C_2)$ between the closed and convex sets $C_1$ 
and $C_2$ is attained. Then we know (see Lemma \ref{lem-d}) that 

\begin{equation}
\label{equ-fixedset}
{\rm F}(P_2P_1) = \{b \in C_2 : |b - P_1b| = {\rm d}(C_1, C_2)\}.
\end{equation}

Let $z \in F(P_2P_1)$. Then $|z - P_1z| = {\rm d}(C_1, C_2)$.
Since 
$$|x_{2n+2} - z| = |P_2x_{2n+1} - P_2P_1z| \le |x_{2n+1} - P_1z| 
= |P_1x_{2n} - P_1z| \le |x_{2n} - z|$$
and the sequence $\{|x_{2n} - z|\}_{n=1}^{\infty}$ is decreasing, 
we see that
\begin{equation}
\label{equ-sne}
\lim_{n \rightarrow \infty}(|x_{2n+1} - P_1z| - |P_2x_{2n+1} - P_2P_1z|)
= 0.
\end{equation}
Since the projection $P_2$ is strongly nonexpansive (see 
Lemma \ref{lem-sne}), it follows that 
\begin{equation}
\lim_{n \rightarrow \infty}(x_{2n+1} - x_{2n+2}) = P_1z - z. 
\end{equation}
Hence
\begin{equation}
\label{equ-claim}
\lim_{n \rightarrow \infty}|x_{2n+2} - x_{2n+1}| = |z - P_1z| 
= {\rm d}(C_1, C_2), 
\end{equation}
as claimed. 

Now assume that ${\rm d}(C_1, C_2)$ is not necessarily attained. In this 
case, there is, however, a sequence $\{z_n : n \in \NN\} \subset C_2$ such 
that the sequence $\{|z_n - P_1{z_n}|\} \rightarrow {\rm d}(C_1, C_2)$ 
as $n \rightarrow \infty$.  

Applying Lemma \ref{lem-proj} to $C_2$ and $P_2$, we obtain, for each 
$n \in \NN$,  
\begin{equation}
|z_n - P_2P_1z_n|^2 + |P_1z_n - P_2P_1z_n|^2 \le |P_1z_n - z_n|^2. 
\end{equation}
Hence $|z_n -P_2P_1z_n|^2 + {\rm d}(C_1, C_2)^2 \le |P_1z_n - z_n|^2$
for each $n \in \NN$. Consequently, the sequence 
$\{|z_n - P_2P_1z_n|\} \rightarrow 0$ as $n \rightarrow \infty$. 
We also have
\begin{equation} \nonumber
\begin{split}
|x_{2n+2} - z_n| 
& \le |P_2x_{2n+1} - P_2P_1z_n| + |P_2P_1z_n - z_n| \\
& \le |x_{2n+1} - P_1z_n| + |P_2P_1z_n - z_n| \\
& \le |x_{2n} - z_n| + |P_2P_1z_n - z_n|.
\end{split}
\end{equation}

Observing that  
\begin{equation}
\big| |x_{2n+2} - z_n| - |x_{2n} - z_n| \big| \le |x_{2n+2} - x_{2n}|
\end{equation}
and that $\lim_{n \rightarrow \infty} |x_{2n+2} - x_{2n}| = 0$ 
(because the composition $P_2P_1$ is asymptotically regular by Lemma 
\ref{lem-ar}), we obtain
\begin{equation}
\lim_{n \rightarrow \infty} 
\big[|x_{2n+1} - P_1z_n| - |P_2x_{2n+1} - P_2P_1z_n|\big] = 0.  
\end{equation}

At this point we invoke Lemma \ref{lem-sne} once more to obtain that 

\begin{equation}
\lim_{n \rightarrow \infty}\big[x_{2n+1} - P_1z_n - (x_{2n+2} - 
P_2P_1z_n)\big] = 0,
\end{equation}

\begin{equation}
\lim_{n \rightarrow \infty}\big[x_{2n+1} - P_1z_n - (x_{2n+2} - 
z_n)\big] = 0 
\end{equation}

and 
\begin{equation}
\lim_{n \rightarrow \infty}\big[(x_{2n+1} - x_{2n+2}) - (P_1z_n - 
z_n)\big] = 0.
\end{equation}

Hence 
\begin{equation}
\label{assert}
\lim_{n \rightarrow \infty} |x_{2n+2} - x_{2n+1}| = {\rm d}(C_1, C_2),
\end{equation}
as asserted.
\end{proof}

\text{}

\text{}

\noindent
{\bf Acknowledgements}. 

\noindent The first author was supported by Grants FWF P23628-N18, 
GA\v{C}R P201/12/0290 and by RVO 67985840. The second author was 
partially supported by the Israel Science Foundation (Grant 389/12), the 
Fund for the Promotion of Research at the Technion (Grant 2001893), and by 
the Technion General Research Fund (Grant 2016723).

\text{}

\text{}

\bigskip

\bigskip

\text{}

\text{}

\bigskip

\bigskip

\newpage

\noindent {\bf Addresses} \\

\noindent
Eva Kopeck\'a
\newline
Institute of Mathematics, Czech Academy of Sciences,
\v{Z}itn\'a 25, CZ-11567 Prague, Czech Republic
\newline
and
\newline
Department of Mathematics, University of Innsbruck,
Technikerstrasse 19a,
A-6020 Innsbruck, Austria

E-mail address: kopecka@math.cas.cz  \\

\noindent
Simeon Reich
\newline
Department of Mathematics, The Technion--Israel Institute of
Technology, 32000 Haifa, Israel

E-mail address: sreich@tx.technion.ac.il

\end{document}